\newcommand*{\rom}[1]{\expandafter\@slowromancap\romannumeral #1@}
\theoremstyle{definition}
\newtheorem{fact}{fact}
\newtheorem{thm}[fact]{Theorem}
\newtheorem{lemma}[fact]{Lemma}
\newtheorem{corollary}[fact]{Corollary}
\newtheorem{defini}[fact]{Definition}
\title{A note on $\mathbb{Z}$ as a direct summand of nonstandard models of weak systems of arithmetic}
\author{Merlin Carl}
\begin{document}

\maketitle

\begin{abstract}
There are nonstandard models of normal open induction ($NOI$) for which $\mathbb{Z}$ is a direct summand of their additive group. We show that this is impossible for nonstandard models of $IE_2$.
\end{abstract}

\section{Introduction}

It is shown in \cite{Me} that the additive group of a model of true arithmetic cannot have $\mathbb{Z}$ as a direct summand. On the other hand,
various models of arithmetic with quantifier-free induction (open induction, IOpen) and of IOpen with the condition of normality are known whose
additive group does have $\mathbb{Z}$ as a direct summand. We ask how strong an arithmetic theory needs to be to rule out $\mathbb{Z}$
as a direct summand of the additive group of a model. In this note, we show that $IE_{2}$, i.e. arithmetic with induction restricted
to formulas with one bounded existential quantifier followed by a bounded universal quantifier and an open formula, suffices.

We start by noting that IOpen does not suffice to rule out $\mathbb{Z}$ as a direct summand of the additive group of a nonstandard model:

\begin{thm}
 There are nonstandard $M$ with $M\models IOpen$ and $H\subset M$ such that $(M,+)=H\oplus\mathbb{Z}$
\end{thm}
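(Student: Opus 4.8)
The plan is to write down an explicit nonstandard model of $IOpen$ in which the standard copy of $\mathbb{Z}$ is visibly a direct summand of the additive group, and to certify that it is a model of $IOpen$ via Shepherdson's classical theorem that the models of $IOpen$ are exactly the integer parts of real closed fields; that is, a discretely ordered commutative ring $D$ satisfies $IOpen$ if and only if there is a real closed field $R$ containing $D$ as an ordered subring such that every $r\in R$ lies in $[d,d+1)$ for some (necessarily unique) $d\in D$.

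First I would fix the ground field $R:=\mathbb{R}((t^{\mathbb{Q}}))$ of Hahn series $\sum_{q}a_q t^{q}$ with real coefficients and well-ordered support in $\mathbb{Q}$, with $t$ infinitesimal, i.e. a nonzero series counted as positive iff the coefficient of its least exponent is positive. This $R$ is real closed, being a Hahn field over the real closed field $\mathbb{R}$ with the divisible value group $\mathbb{Q}$. Inside $R$ let $\mathfrak{M}$ denote the purely infinite elements, i.e. the series supported on $\mathbb{Q}_{<0}$, and set $M:=\mathfrak{M}\oplus\mathbb{Z}$ (a genuine direct sum of additive groups, since a series with support in $\mathbb{Q}_{<0}$ has zero constant term and hence meets $\mathbb{Z}$ only in $0$).

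The verification then splits into three routine points. First, $M$ is a subring of $R$: closure under multiplication holds because $\operatorname{supp}(f)+\operatorname{supp}(g)\subseteq\mathbb{Q}_{<0}$ when $\operatorname{supp}(f),\operatorname{supp}(g)\subseteq\mathbb{Q}_{<0}$, so the product of two elements $f+z$ and $f'+z'$ of $M$ again has integer constant term and all remaining terms in negative degrees. Second, $M$ is discretely ordered with least positive element $1$: any $f+z\in M$ with $f\neq 0$ has least exponent negative, hence is infinite, so cannot lie in $(0,1)$. Third, $M$ is an integer part of $R$: given $r\in R$, split off the negative-exponent part $f\in\mathfrak{M}$ and write $r-f=a_0+\varepsilon$ with $a_0\in\mathbb{R}$ the constant term and $\varepsilon$ infinitesimal; then $f+\lfloor a_0\rfloor\in M$ works, except in the edge case $a_0\in\mathbb{Z}$ and $\varepsilon<0$, where $f+a_0-1\in M$ works. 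By Shepherdson's theorem $M\models IOpen$, and $M$ is nonstandard because $t^{-1}\in M$ exceeds every $n\in\mathbb{N}$.

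Finally the additive decomposition is immediate: the projection $\phi\colon M\to\mathbb{Z}$, $f+z\mapsto z$, is a homomorphism of abelian groups with $\phi(1)=1$, so with $H:=\ker\phi=\mathfrak{M}$ one has $(M,+)=H\oplus\mathbb{Z}$. The only part I expect to require genuine care is the integer-part verification (the third point above) together with the correct bookkeeping of the signs of the constant and infinitesimal parts; everything else, and in particular the additive structure that is the actual content of the theorem, falls out of the way $M$ was defined. If one wishes to avoid citing Shepherdson's criterion as a black box, one can instead check the least-element property for quantifier-free formulas directly: over the real closed field $R$ such a formula defines a finite union of intervals whose endpoints are among the finitely many roots in $R$ of the polynomials occurring, and since $M$ is an integer part of $R$ it contains a least element lying strictly above any prescribed element of $R$, which yields the least witness.
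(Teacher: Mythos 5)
Your proposal is correct and is in substance the same argument the paper gives: the ring $\mathfrak{M}\oplus\mathbb{Z}$ inside $\mathbb{R}((t^{\mathbb{Q}}))$ is precisely the Mourgues--Ressayre integer part of that real closed field, and the paper's one-line proof cites exactly such integer parts, whose $IOpen$-property likewise rests on Shepherdson's criterion. The only difference is that you spell out the verification (subring, discreteness, integer part, the projection onto $\mathbb{Z}$) rather than citing \cite{MR}, which makes the proof self-contained but not a genuinely different route.
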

\begin{proof}
The integer parts of real closed fields constructed by Morgues-Ressayre (\cite{MR}) obviously have $\mathbb{Z}$ as a direct summand.
\end{proof}

We can even demand that these models are normal:\\

\begin{thm}
 There are nonstandard $M$ with $M\models NOI$ and $H\subset M$ such that $(M,+)=H\oplus\mathbb{Z}$.
\end{thm}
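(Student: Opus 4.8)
The plan is to realize $M$ as a discretely ordered subring of a Hahn series field $F=\R((G))$ — but, unlike in Theorem~1, \emph{not} as the full Mourgues--Ressayre integer part of $F$, since that fails to be normal. Indeed, if $G$ is a nontrivial divisible ordered abelian group (say $G=\Q$) and $\mathrm{Neg}(F)$ denotes the group of purely infinite elements (series supported on $G^{<0}$), then $\mathrm{Neg}(F)\oplus\Z$ is an integer part of $F$ but is not normal: fixing an infinite monomial $\tau$ of $F$ and putting $\varphi=\tfrac{1+\sqrt5}{2}$, the element $\tau+\varphi$ lies in the fraction field of $\mathrm{Neg}(F)\oplus\Z$ and is integral over it — it is a root of $X^{2}-X-(\tau^{2}+\sqrt5\,\tau+1)$ — yet it is not in $\mathrm{Neg}(F)\oplus\Z$. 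So one must pass to a carefully chosen subring. I would build, in the spirit of the Mourgues--Ressayre construction and of Berarducci and Otero's construction of a recursive nonstandard model of $NOI$, an increasing chain $M_{0}\subseteq M_{1}\subseteq\cdots$ of countable discretely ordered subrings of $F$, each truncation closed in $F$, starting from $M_{0}=\Z[\tau]$ (which is nonstandard, $\tau$ being infinite) and alternately (i) closing off under integer parts — adjoining the elements $\lfloor r\rfloor$, computed in $F$, for $r$ in the real closure of the current fraction field, realized inside $F$ — and (ii) closing off under normality — adjoining the elements of the current fraction field that are integral over the current ring — re-closing under ring operations and truncation after each step. Put $M=\bigcup_{n}M_{n}$.

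Step (i) is harmless, since $\lfloor r\rfloor=r|_{<0}+n$ for a suitable $n\in\Z$, which introduces no element of $(0,1)$ and is compatible with truncation. The real work, and the main obstacle, is step (ii): the integral closure of a discretely ordered domain in its fraction field need not be discretely ordered (e.g.\ $\Z[\sqrt2]\supseteq\Z$ contains $\sqrt2-1$), so one must avoid admitting into the fraction field any element whose presence would force a new element of $(0,1)$. Arranging this is precisely the content of Berarducci and Otero's argument, which I would follow; one also checks that the step keeps the ring truncation closed — step (i) at the next round supplies, with each newly adjoined $\alpha$, its purely infinite part $\alpha|_{<0}=\lfloor\alpha\rfloor-n$ for the appropriate $n\in\Z$. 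Granting this, $M$ is discretely ordered and closed under integer parts, hence an integer part of the real closed field $\tilde R=$ real closure of $\mathrm{Frac}(M)$ inside $F$, so $M\models IOpen$; $M$ is integrally closed in $\mathrm{Frac}(M)$ by construction, so $M\models NOI$; and $M$ is nonstandard and truncation closed in $F$.

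The direct-summand conclusion is then formal. By truncation closure, $a|_{<0}\in M$ for every $a\in M$, so $a-a|_{<0}\in M$; but $a-a|_{<0}$ is a finite element of the discretely ordered ring $M$ (which contains $\Z$), hence lies in $\Z$. Therefore $M\subseteq\mathrm{Neg}(F)\oplus\Z$, and the map $\mathrm{Neg}(F)\oplus\Z\to\Z$ sending $s^{-}+n$ to $n$ (for $s^{-}\in\mathrm{Neg}(F)$, $n\in\Z$) restricts to a group homomorphism $M\to\Z$ that is the identity on $\Z$; its kernel $H:=M\cap\mathrm{Neg}(F)$ then satisfies $(M,+)=H\oplus\Z$. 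Relative to Theorem~1, the only extra ingredient is that step (ii) must be carried out so as to keep the ring both discretely ordered and truncation closed, which is exactly where the Berarducci--Otero analysis is needed.
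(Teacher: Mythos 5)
There is a genuine gap, and it is fatal to the route you chose rather than a missing detail: the ring you propose to build cannot exist. Your direct-summand argument needs $M$ to be simultaneously nonstandard, a model of $NOI$, and closed under taking purely infinite parts ($a\in M\Rightarrow a|_{<0}\in M$, which your truncation closure gives). These three properties are jointly contradictory. Indeed, let $a\in M$ be positive and nonstandard. Since $M\models IOpen$, Shepherdson's theorem makes $M$ an integer part of the real closure of $\mathrm{Frac}(M)$ inside $F=\mathbb{R}((G))$, so there is $m\in M$ with $m\leq\sqrt{2a^{2}+1}<m+1$. The difference $\sqrt{2a^{2}+1}-m$ is finite, so $m$ and $\sqrt{2a^{2}+1}$ have the same purely infinite part, which is exactly $\sqrt{2}\,(a|_{<0})$ (because $\sqrt{2a^{2}+1}-\sqrt{2}a$ is infinitesimal and $a-a|_{<0}$ is finite). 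Closure under purely infinite parts then puts both $\sqrt{2}\,(a|_{<0})$ and $a|_{<0}\neq 0$ into $M$, hence $\sqrt{2}\in\mathrm{Frac}(M)$; since $\sqrt{2}$ is a root of $X^{2}-2$ over $\mathbb{Z}\subseteq M$, normality would force $\sqrt{2}\in M$, contradicting discreteness. So no nonstandard truncation-closed subring of a Hahn field over $\mathbb{R}$ models $NOI$. In particular, your step (i) is not ``harmless'': adjoining floors of elements of the real closure is precisely what imports irrational algebraic coefficients into the purely infinite parts, and no selectivity in step (ii) --- nor any appeal to the Berarducci--Otero analysis, whose model cannot be truncation closed for the same reason --- can repair this. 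Your two correct observations (that $\mathrm{Neg}(F)\oplus\mathbb{Z}$ fails normality, via $\tau+\tfrac{1+\sqrt{5}}{2}$, and that truncation closure formally yields $(M,+)=(M\cap\mathrm{Neg}(F))\oplus\mathbb{Z}$) only sharpen the conflict: the second one is exactly the property that normality rules out.

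The paper avoids this entirely: it simply applies Proposition 1 of \cite{GA} to $\mathbb{R}((\mathbb{Q}))$, and the normal integer parts produced there have $\mathbb{Z}$ as a direct summand for reasons internal to that construction, not because they are truncation closed --- by the argument above they cannot be. If you want a self-contained proof, you would need a different mechanism than $H=M\cap\mathrm{Neg}(F)$ for splitting off $\mathbb{Z}$, i.e.\ a complement $H$ produced along with the model rather than read off from the Hahn-field representation.
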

\begin{proof}
Applying Proposition $1$ of \cite{GA} to $\mathbb{R}((\mathbb{Q}))$ gives an example.
\end{proof}

\section{Main Result}

We now show that $IE_{2}$ suffices to rule out $\mathbb{Z}$ as a direct summand of the additive group of a nonstandard model.

\begin{defini}
$E_2$ is the class of formulas in the language $\mathcal{L}$ of arithmetic of the form $\exists{x<t_1}\forall{y<t_2}\phi(x,y,\vec{z})$, where $t_1$ is a term not containing $x$, $t_2$ is a term not containing $y$ and $\phi$ is an open formula.\\
$IE_2$ is the axiomatic system consisting of the basic axioms of arithmetic together with induction for $E_2$-formulas.
\end{defini}

\begin{thm}
Let $M\models IE_2$ be nonstandard. Then there is no $H\subset M$ such that $(M,+)=H\oplus\mathbb{Z}$.
\end{thm}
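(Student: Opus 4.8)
The plan is to assume $(M,+)=H\oplus\mathbb{Z}$ and derive a contradiction by rigidly pinning down the projection onto the $\mathbb{Z}$-summand. That projection is a surjective homomorphism of additive monoids $\phi\colon M\to\mathbb{Z}$ with kernel $H$. First I would observe that $\phi$ is essentially forced. Division with remainder is available already in $IOpen$, hence in $IE_2$: for a standard $n\geq 1$ and any $x\in M$, writing $x=nq+r$ with $0\leq r<n$ gives $\phi(x)\equiv r\,\phi(1)\pmod n$. Applying this to some $x$ with $\phi(x)=1$ shows $\phi(1)$ is a unit modulo every standard modulus, so $\phi(1)=\pm 1$; replacing $\phi$ by $-\phi$ if necessary, assume $\phi(1)=1$. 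Then $\phi(m)=m$ for every standard integer $m$, and the same division argument gives $\phi(x)\equiv x\pmod n$ for every standard $n$. Hence for all $x,y$ we get $\phi(xy)\equiv xy\equiv\phi(x)\phi(y)\pmod n$ for all standard $n$, and since distinct integers differ modulo some standard $n$ this forces $\phi(xy)=\phi(x)\phi(y)$. Thus $\phi$ is in fact a surjective ring homomorphism.

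Next I would isolate a convenient element of the kernel. Since $\phi(m)=m\neq 0$ for every nonzero standard $m$, the kernel $H$ contains no nonzero standard element. Moreover $H\neq 0$: otherwise $\phi$ is an injective unital ring homomorphism $M\to\mathbb{Z}$, so $M$ is isomorphic to the standard model, contradicting nonstandardness. Pick any $c\in H$ with $c>0$; by the previous remark $c$ is nonstandard, so $c-1>0$. The contradiction then comes from the arithmetical input that $IE_2$ proves Lagrange's four-square theorem, so that $c-1=a^2+b^2+d^2+e^2$ for suitable $a,b,d,e\in M$. Applying $\phi$ gives $\phi(c-1)=\phi(a)^2+\phi(b)^2+\phi(d)^2+\phi(e)^2\geq 0$, while $\phi(c-1)=\phi(c)-\phi(1)=-1<0$. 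This finishes the proof once the key input is in place. (Everything up to this point reuses the strategy of \cite{Me} for true arithmetic; the point of the present note is to lower the hypothesis to $IE_2$.)

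The main obstacle is precisely that key input: that $IE_2$ is strong enough to prove Lagrange's theorem. This is exactly where $IE_2$ must beat $IOpen$ and $NOI$: the models of Theorems~1 and~2 do carry such a $\phi$, so they must contain positive elements (of ``odd degree'', in the Mourgues--Ressayre / power-series picture) that are not sums of four squares. One therefore has to verify that the classical proof of Lagrange's theorem survives with only $E_2$-induction. Euler's four-square identity is purely equational. The reduction of the theorem to the prime case, by examining a least positive non-sum-of-four-squares, uses only the least-number principle for $U_1$-formulas, which follows from $IE_2$ (as $U_1\subseteq E_2$ and $IE_1\vdash LU_1$). Fermat's infinite descent from a multiple $mp$ of a prime to a smaller one uses the least-number principle for $E_1$-formulas, again available in $IE_2$. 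The genuinely delicate step is the seed of the descent: that for a prime $p$ there are $a,b<p$ with $p\mid a^2+b^2+1$. This rests on counting quadratic residues modulo $p$, and the part of the argument that needs the most care is checking that the relevant instance of the $\Delta_0$ pigeonhole principle can be carried out using only $E_2$-induction.
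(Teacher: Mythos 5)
The first half of your argument (the projection $\phi$ onto the $\mathbb{Z}$-summand respects congruences modulo every standard modulus and is in fact a ring homomorphism) is sound and runs parallel to the paper's Lemma 4 and Corollary 5, which are proved by exactly this kind of division-with-remainder reasoning. The problem is the endgame. Your contradiction requires that every nonstandard model of $IE_2$ satisfies Lagrange's four-square theorem, and this is precisely the step you do not supply; it is not a ``delicate verification'' but the entire difficulty, and as far as is known it is out of reach. Provability of the four-square theorem is open even for the much stronger theory $I\Delta_{0}$; the known proofs (Berarducci--Intrigila) work in $I\Delta_{0}+\Omega_{1}$, where a weak pigeonhole principle is available --- this is exactly why the paper's Theorem 9(i) and the Remark following it invoke $I\Delta_{0}+EXP$, respectively $I\Delta_{0}+\Omega_{1}$, when it wants the Mendelson-style sum-of-four-squares argument from \cite{Me}. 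The seed of the descent, finding $a,b<p$ with $p\mid a^{2}+b^{2}+1$ for a (possibly nonstandard) prime $p$, needs an exact counting/pigeonhole argument over a set of size comparable to $p$, and nothing like that is known to follow from $E_{2}$-induction (the status of $\Delta_{0}$-PHP in $I\Delta_{0}$ itself is a well-known open problem). So your proposal reduces the theorem to a statement that is at least as hard as open problems strictly above $IE_2$.

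The paper's proof is designed to avoid asking the model to prove any serious number theory. It only uses (a) divisibility of elements of $H$ by standard primes (your congruence argument, essentially), and (b) $E_{2}$-overspill applied to the $E_{2}$ formula ``there is an irreducible between $n$ and $2n$ congruent to $3$ modulo $5$ (unless $n<C$)'', whose truth at all standard $n$ is certified \emph{externally}, in $\mathbb{N}$, by the asymptotic Dirichlet theorem (Siegel--Walfisz, \cite{Br}); overspill then hands you a nonstandard irreducible $q\equiv 3 \pmod 5$ inside $M$ for free. Such a $q$ is divisible by no standard prime, so by the congruence corollary it would have to be congruent to $\pm1$ modulo every standard prime, contradicting $q\equiv 3\pmod 5$. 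If you want to salvage your route, you would have to either prove Lagrange's theorem in $IE_2$ (do not expect this to be feasible) or imitate the paper's trick and import the four-square property of an initial segment by overspill --- but the natural formalization ``every $x<n$ is a sum of four squares'' is a bounded $\forall\exists$ ($U_{2}$-type) formula, not $E_{2}$, so even that transfer is not automatic in $IE_2$. As it stands, the proposal has a genuine gap at its central step.
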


We will prove this by three intermediate results. Assume for the rest of this section that $H$ is such a group complement of $\mathbb{Z}$, we work for a contradiction.\\

\begin{lemma}
Every element $n$ of $H$ is divisible by every standard prime.
\end{lemma}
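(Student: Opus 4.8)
The plan is to play the ring structure of $M$ against the group-theoretic splitting, using the projection onto the $\mathbb{Z}$-summand together with division with remainder by a standard prime. Let $\pi$ denote the projection $(M,+)\to\mathbb{Z}$ with kernel $H$ afforded by $(M,+)=H\oplus\mathbb{Z}$; since the summand $\mathbb{Z}$ is the subgroup generated by $1$, the map $\pi$ fixes $\mathbb{Z}$ pointwise, and in particular $\pi(k)=k$ for every standard integer $k$. The second ingredient is that $IE_2$ — which contains $IOpen$ — proves division with remainder by any fixed standard $p\geq 2$: applying open induction to the open formula $px\leq a$, which holds at $x=0$ and fails at $x=a+1$, produces $q$ with $pq\leq a<p(q+1)$, and then $r:=a-pq$ satisfies $0\leq r<p$ and is therefore a standard natural number.

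To carry this out, fix a standard prime $p$ and let $n\in H$, so that $\pi(n)=0$. By the previous paragraph write $n=pq+r$ with $q\in M$ and $r\in\{0,1,\dots,p-1\}$. Because $p$ is standard, $pq$ is the $p$-fold sum $q+\dots+q$, so applying the homomorphism $\pi$ and using $\pi(r)=r$ gives $0=\pi(n)=p\,\pi(q)+r$ in $\mathbb{Z}$. Hence $p$ divides $r$ in $\mathbb{Z}$, and since $0\leq r<p$ this forces $r=0$; therefore $n=pq$, i.e. $p\mid n$ in $M$, as claimed. (The same computation works with $p$ replaced by any standard positive integer, so every element of $H$ is in fact divisible in $M$ by every standard natural number.)

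This lemma is preparatory and presents no real obstacle; in particular the argument uses nothing beyond $IOpen$, which is consistent with the fact that $IOpen$ alone does not suffice to rule out $\mathbb{Z}$ as a direct summand — the genuine use of $IE_2$ will appear only in the later steps. The two points that do require a little care are exactly the ingredients isolated above: that the copy of $\mathbb{Z}$ occurring in the splitting is the one generated by $1$, so that the standard residue $r$ lies in it and is fixed by $\pi$; and that division with remainder by a fixed standard modulus is already available in $IOpen$, which is what guarantees that $r$ is a genuine standard natural number to which the conclusion "$p\mid r$ in $\mathbb{Z}$" can be applied.
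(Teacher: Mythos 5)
Your proof is correct and follows essentially the same route as the paper: both rest on division with remainder by the standard prime $p$ (available already in $IOpen$) together with the splitting $(M,+)=H\oplus\mathbb{Z}$, which forces the standard remainder-type element to lie in $H\cap\mathbb{Z}=\{0\}$. The only difference is presentational: you argue directly via the projection $\pi$ onto the $\mathbb{Z}$-summand, whereas the paper argues by contradiction, taking the $H$-component $m'$ of the quotient and squeezing $pm'-n$ between two standard integers.
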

\begin{proof}
Assume wlog that $n>0$. Let $\mathbb{P}$ denote the standard primes. Suppose for a contradiction that $n\in H$ and $p\in\mathbb{P}$ are such that $p$ does not divide $n$.
Then $IE_{2}$ proves that there is $m$ such that $pm<n<p(m+1)$, so such an $m$ exists in $M$. Let $m^{\prime}\in H$ such that $d:=m^{\prime}-m\in\mathbb{Z}$. 
Then, since $p$ is standard, we must have $pm^{\prime}\in H$. Hence $pm^{\prime}-n\in H$ as well.
Furthermore, we have $\mathbb{Z}\ni p(m^{\prime}-m-1)=pm^{\prime}-p(m+1)<pm^{\prime}-n<pm^{\prime}-pm=p(m^{\prime}-m)=pd\in\mathbb{Z}$, so $pm^{\prime}-n\in\mathbb{Z}$. Therefore, we get $pm^{\prime}-n\in H\cap\mathbb{Z}$.
But $H\cap\mathbb{Z}=\{0\}$, since $0$ must be an element of every subgroup and hence a group complement of $\mathbb{Z}$ cannot contain any other element of $\mathbb{Z}$. So we conclude that $pm^{\prime}-n=0$, i.e. $n=pm^{\prime}$, which 
implies that $n$ is indeed divisible by $p$, a contradiction.\\
\end{proof}

\begin{corollary}
For any $m\in M$, there is $z\in\mathbb{Z}$ such that $m\equiv_{p}z$ for all standard primes $p$.
\end{corollary}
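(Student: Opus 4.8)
The plan is to read this off directly from the direct sum decomposition together with the previous Lemma, so the proof will be very short. Given $m\in M$, since $(M,+)=H\oplus\mathbb{Z}$ we may write $m=h+z$ with $h\in H$ and $z\in\mathbb{Z}$, uniquely. This single $z$ is the witness claimed in the statement, and the whole point is that it does not depend on the prime $p$.

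Now I would invoke the Lemma: every element of $H$ is divisible by every standard prime. So for each standard prime $p$ there is $h_p\in M$ with $h=ph_p$, whence $m=ph_p+z$ and therefore $m\equiv_p z$. Since $p$ ranges over all standard primes while $z$ stays fixed, we obtain $m\equiv_p z$ for all standard primes $p$ at once, as desired.

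I do not expect any genuine obstacle here; the corollary is essentially a reformulation of the Lemma. The one thing worth stressing — and the only reason the statement is not entirely trivial — is the uniformity in $p$: a priori one might only get, for each $p$, some integer $z_p$ with $m\equiv_p z_p$, and gluing these into a single $z$ would require an overspill or compactness argument. Here that uniformity comes for free because $z$ is extracted from one fixed decomposition $m=h+z$, and $h$ is divisible by \emph{every} standard prime simultaneously.
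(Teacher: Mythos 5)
Your proof is correct and is essentially identical to the paper's: both decompose $m=h+z$ via the direct sum, apply the Lemma to get $h\equiv_{p}0$ for every standard prime $p$, and conclude $m\equiv_{p}z$ with the single fixed $z$. Your remark on the uniformity in $p$ is a fair observation but does not change the argument.
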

\begin{proof}
Let $m\in M$, so $m$ can be written in the form $h+z$ for some $h\in H$ and some $z\in\mathbb{Z}$. By the last lemma, $h\equiv_{p}0$ for all standard primes $p$, hence $m\equiv_{p}z$ for all standard primes $p$.
\end{proof}

\begin{lemma}
In $M$, there is an infinite irreducible $q$ such that $q\equiv3$ mod $5$.
\end{lemma}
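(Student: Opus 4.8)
The plan is to establish the following, slightly stronger, statement: \emph{every} nonstandard $M\models IE_2$ contains an infinite irreducible $q$ with $q\equiv 3\bmod 5$ — a fact that does not use the standing assumption on $H$ at all. (The contradiction with that assumption then comes from Corollary 6: an infinite irreducible is coprime to every standard prime, so its $\Z$-component must be $\pm 1$ and hence it is $\equiv\pm 1\bmod 5$, never $\equiv 3\bmod 5$.) To produce the irreducible, fix once and for all a nonstandard $c\in M$ and consider
$$\phi(x)\ :\equiv\ \exists q\le c\ \bigl(x\le q\ \wedge\ q\text{ is irreducible}\ \wedge\ q\equiv 3\bmod 5\bigr).$$
The first point to verify is that $\phi$ is, up to provable equivalence, an $E_2$-formula with $c$ as its only parameter: irreducibility of $q$ is the bounded universal statement $q>1\wedge\forall a,b<q\,(ab\neq q)$, the congruence is the bounded existential $\exists k\le q\,(q=5k+3)$, and a routine use of a pairing function merges the two bounded existentials (on $q$ and on $k$) into one and the two bounded universals (on $a$ and on $b$) into one, putting $\phi$ in the shape $\exists u<t_1\,\forall v<t_2\,\psi$ with $\psi$ open and $t_1,t_2$ terms involving only $c$. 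Thus $E_2$-induction applies to $\phi$.

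The only number-theoretic input is the following, used \emph{only at standard values}: for each standard $n$, $M\models\phi(n)$. Indeed, by Dirichlet's theorem in the genuine integers there is an honest prime $p\equiv 3\bmod 5$ with $p\ge n$; since $p$ is a standard numeral and $c$ is nonstandard we have $p<c$, and $M\models$``$p$ is irreducible'' because for a fixed numeral this assertion reduces to finitely many true numerical (in)equalities and is settled by the basic axioms. So $q=p$ witnesses $\phi(n)$. Note that no uniform (gap-bound) version of Dirichlet is invoked — only the triviality that each \emph{particular} prime in the progression is a standard number below $c$.

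Now I would run an overspill argument through $E_2$-induction. Either $\phi(a)$ holds for some nonstandard $a$, in which case the witnessing $q$ satisfies $a\le q\le c$, is irreducible and $\equiv 3\bmod 5$, and $q\ge a$ is nonstandard — so we are done; or $\phi$ holds only at standard points. In the latter case I claim $M\models\phi(0)\wedge\forall x\,(\phi(x)\to\phi(x+1))$: $\phi(0)$ holds (take $q=3$), and for the step, if $\phi(x)$ holds then by the case hypothesis $x$ is standard, hence $x+1$ is standard and $\phi(x+1)$ holds by the previous paragraph, while if $\phi(x)$ fails the implication is vacuous. By $E_2$-induction $M\models\forall x\,\phi(x)$, in particular $M\models\phi(c)$; but $\phi(c)$ exhibits an irreducible $q\ge c$, which is nonstandard, contradicting the assumption that $\phi$ holds at standard points only. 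Hence the first alternative must hold, and it delivers the desired infinite irreducible $q\equiv 3\bmod 5$.

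I expect the main obstacle to be a conceptual rather than a computational one. The naive route — take $N$ coprime to all small primes with $N\equiv 3\bmod 5$ and extract a prime factor — fails, because no congruence condition on a prime divisor of $N$ separates the residue $3$ from $2$ modulo $5$, and in a system as weak as $IE_2$ one cannot fall back on bounds for gaps between primes in arithmetic progressions; so one really must let all the work be done by a single application of $E_2$-induction to the carefully bounded formula $\phi$. Accordingly, the two places that need genuine care are the verification that $\phi$ (after the pairing-coding) is honestly an $E_2$-formula, and the case split used to check that $\phi(x)\to\phi(x+1)$ holds throughout $M$ in the induction step.
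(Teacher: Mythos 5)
Your proof is correct, and it is a genuine variant of the paper's argument rather than a reproduction of it. Both proofs run overspill on an $E_2$-formula asserting the existence of a bounded irreducible $\equiv 3\bmod 5$ above $n$, but they bound the witness differently, and this changes the number-theoretic input. The paper's formula $A(n)$ bounds the witness by the term $2n$, so it is self-contained (no nonstandard parameter); the price is that verifying $A(n)$ at all standard $n$ requires the \emph{asymptotic} form of Dirichlet's theorem (via Siegel--Walfisz): for all large $x$ there is a prime $\equiv 3\bmod 5$ in $(x,2x)$. You instead bound the witness by a fixed nonstandard parameter $c$, so the standard instances $\phi(n)$ follow from nothing more than the existence of arbitrarily large standard primes $\equiv 3\bmod 5$, since any standard prime automatically lies below $c$; a standard prime is indeed irreducible in $M$ because the initial segment below a standard numeral is standard. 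Your formula is $E_2$ for the same reasons the paper's is (blocks of bounded quantifiers, with the universal bound allowed to mention the existentially quantified variable --- the paper's own $A(n)$ takes the same liberties, so your pairing caveat puts you on equal footing), and your explicit derivation of overspill from $E_2$-induction with parameters is the standard one and is sound; note only that $M\models\forall x\,\phi(x)$ already contradicts the case hypothesis, without needing to inspect $\phi(c)$ in particular. The trade-off: your argument is less uniform (the formula depends on the chosen $c$), but it needs only the qualitative Dirichlet theorem, which is a real simplification. One quibble with your closing commentary: the paper \emph{does} use gap bounds for primes in progressions, but only externally, to verify the standard instances of $A(n)$; neither version needs any such bound inside $IE_2$.
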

\begin{proof}
Consider the formula $A(n):=\exists{m,k<2n}\forall{a,b<2n}(n<C\vee(m>n\wedge m=5k+3\wedge(ab=m\implies(a=1\vee b=1))))$. It is obviously $E_2$. $A(n)$ says that, unless $n<C$, there is a prime between $n$ and $2n$ which is congruent to $3$ modulo $5$.
By the well-known asymptotic variant of Dirichlet's theorem (such as the Siegel-Walfisz-Theorem, see e.g. Satz 3.3.3 on p. 114 of \cite{Br}), the number $\pi(x;3,5)$ of primes below $x$ which are congruent to $3$ 
modulo $5$ is $\frac{x}{4log(x)}(1+\mathcal{O}(\frac{1}{log(x)}))$. It follows that, for sufficiently large $x$,
we have $\pi(2x;3,5)-\pi(x;3,5)>0$, so there is such a prime between $x$ and $2x$. Let $C$ be large enough that this holds for $x\geq C$. Then $A(n)$ holds for all standard natural numbers $n$.\\
As $M\models IE_2$, $M$ satisfies $E_2$-overspill. Hence there is a nonstandard element $n^{\prime}$ of $M$ such that $M\models A(n^{\prime})$. As $n^{\prime}$ is infinite, $n^{\prime}>C$, so there is an irreducible $q$ between $n^{\prime}$ and $2n^{\prime}$
leaving residue $3$ modulo $5$, as desired.
\end{proof}

\textbf{Remark}: This Lemma fails in models of mere IOpen: The methods in \cite{MM} can be used to construct nonstandard models of IOpen in which there are unboundedly many primes, but all nonstandard primes leave residue $1$ modulo $5$.

Now we can prove the theorem: By the corollary, there must be some standard integer $z$ such that $q\equiv_{p}z$ for all standard primes $p$. As $q$ is irreducible and infinite, $q$ is not divisible by any standard prime. Hence $z$ is not divisible by
any standard prime. So $z\in\{-1,1\}$. But $z\equiv_{5}q\equiv_{5}3$, hence this is impossible. Contradiction.\\

An immediate consequence is that the integer parts constructed in \cite{MR} or \cite{GA} can never be models of $IE_{2}$:\\

\begin{corollary}
Let $K$ be a non-archimedean real closed field, and let $Z$ be an integer part of $K$ generated by one of the constructions described in \cite{MR} or \cite{GA}. Then $(Z^{\geq0},+,\cdot)\not\models IE_2$.
\end{corollary}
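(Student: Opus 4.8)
The plan is to deduce this corollary directly from the main Theorem, the only work being to check that its hypotheses apply. Concretely, for an integer part $Z$ produced by one of the constructions of \cite{MR} or \cite{GA} I would establish two points: (i) $Z$ is nonstandard, and (ii) $\Z$ is a direct summand of $(Z,+)$. Granting these, were $(Z^{\geq0},+,\cdot)$ a model of $IE_2$, the main Theorem (applied to the discretely ordered ring underlying $Z$) would be violated, which gives the claim.

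For (i): since $K$ is non-archimedean it contains an element $x$ exceeding every standard integer; as $Z$ is an integer part of $K$ there is $z\in Z$ with $z\leq x<z+1$, and then $z$ already exceeds every standard integer, so $Z^{\geq0}$ is a nonstandard model of arithmetic. (Equivalently, any integer part of a non-archimedean ordered field is nonstandard because it is cofinal in the field.) For (ii) one recalls the shape of the two constructions: in \cite{MR} the integer part is built inside a Hahn field $k((G))$ (or a truncation-closed subfield) and emerges as $\Z\oplus N$, where $N$ is the additive group of the purely infinite series, i.e.\ those supported on the negative part of the value group; similarly Proposition $1$ of \cite{GA} yields \emph{normal} integer parts of exactly this form. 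In either case $\Z$ is literally a direct summand. This is precisely the structural observation already underlying the proofs of Theorems $1$ and $2$, now invoked for an arbitrary such $Z$ rather than for a single chosen base field.

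Combining (i) and (ii): suppose toward a contradiction that $(Z^{\geq0},+,\cdot)\models IE_2$. Regard $Z$ as the discretely ordered ring whose nonnegative part is this model; by (i) it is nonstandard, and by (ii) $(Z,+)=N\oplus\Z$. This contradicts the main Theorem, so $(Z^{\geq0},+,\cdot)\not\models IE_2$.

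The main, and essentially the only, obstacle is point (ii): one must inspect the constructions of \cite{MR} and of \cite{GA} closely enough to see that the splitting off of $\Z$ is an intrinsic feature of the method and not an accident of a particular $K$. Everything else is immediate from the definitions and from the main Theorem.
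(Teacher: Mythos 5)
Your proposal is correct and matches the paper's argument: the paper's proof is the one-line observation that all the integer parts from \cite{MR} and \cite{GA} have $\mathbb{Z}$ as a direct summand, whence the main Theorem applies. Your elaboration of the nonstandardness of $Z$ and of the Hahn-field structure underlying the splitting $\mathbb{Z}\oplus N$ simply fills in details the paper leaves implicit.
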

\begin{proof}
All of these $IP$'s have $\mathbb{Z}$ as a direct summand.
\end{proof}

%

By a well-known result of V. Pratt, primality testing is in $NP$. Therefore, there is a $\Sigma_{1}^{b}$-definition of primality, where a $\Sigma_{1}^{b}$-formula is a formula starting with one bounded existential quantifier followed
by logarithmically bounded quantifiers (see e.g. \cite{HP}). Hence, we can reformulate our $A(n)$ as a $\Sigma_{1}^{b}$-formula, which gives us the following result:\\

\begin{corollary}
If $M\models I\Sigma_{1}^{b}$ is nonstandard, then $\mathbb{Z}$ is not a direct summand of $(M,+)$.
\end{corollary}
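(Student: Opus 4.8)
The plan is to rerun the proof of the Theorem with $I\Sigma_{1}^{b}$ in place of $IE_{2}$; the only substantive change is that the formula $A(n)$ of the second Lemma has to be recast in $\Sigma_{1}^{b}$-form. So suppose, for a contradiction, that $M\models I\Sigma_{1}^{b}$ is nonstandard and $(M,+)=H\oplus\mathbb{Z}$. The first Lemma and the Corollary transfer verbatim: their only arithmetical content is that, for a fixed standard numeral $p$, the model satisfies division with remainder by $p$, i.e.\ $\forall n\,\exists m\,(pm\le n<p(m+1))$, and this is available in $I\Sigma_{1}^{b}$ (indeed in much weaker fragments); the rest happens inside the group $(M,+)$ using $H\cap\mathbb{Z}=\{0\}$. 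So again: for every $m\in M$ there is $z\in\mathbb{Z}$ with $m\equiv_{p}z$ for all standard primes $p$.

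It remains to redo the second Lemma over $I\Sigma_{1}^{b}$. By Pratt's theorem primality has a $\Sigma_{1}^{b}$-definition $\mathrm{Prime}(x)$, namely $\exists w\le t(x)\,\gamma(w,x)$, where $t$ is a term (an iterated smash of $x$) bounding the size of a Pratt certificate and $\gamma(w,x)$, the assertion that $w$ codes a valid Pratt certificate for $x$, is $\Sigma_{1}^{b}$: it is a sharply bounded conjunction, ranging over the $O(|x|^{2})$ nodes of the certificate tree, of local conditions, each of which states a claimed prime factorisation of the predecessor of a node together with two modular exponentiations; modular exponentiation by repeated squaring has a $\Sigma_{1}^{b}$ graph, and the standard sequence-coding argument keeps the whole formula in $\Sigma_{1}^{b}$. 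Let $A'(n)$ be obtained from $A(n)$ by deleting the quantifiers $\forall a,b<2n$, replacing the clause $ab=m\implies a=1\vee b=1$ by $\mathrm{Prime}(m)$, and pulling the resulting $\exists w$ out in front; this $A'(n)$ is a $\Sigma_{1}^{b}$-formula with the same informal meaning. For every standard $n$, $A'(n)$ is true --- trivially if $n<C$, and for $n\ge C$ because the Siegel--Walfisz estimate produces an honest prime in $(n,2n]$ congruent to $3$ modulo $5$, which, being prime, possesses an honest standard Pratt certificate --- so, since $M$ (being a model of the basic axioms) satisfies every true $\Sigma_{1}$-sentence, $A'(n)$ holds in $M$ for all standard $n$.

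If $A'$ held in $M$ at no nonstandard argument, then $A'$ would define the standard cut $\mathbb{N}$ in $M$; but then the hypotheses $A'(0)$ and $\forall n\,(A'(n)\to A'(n+1))$ of the $\Sigma_{1}^{b}$-induction axiom for $A'$ would both hold in $M$, forcing $M\models\forall n\,A'(n)$, a contradiction. Hence $M\models A'(n')$ for some nonstandard $n'$, which yields $q\in M$ with $n'<q\le 2n'$ (so $q$ is infinite), $q\equiv_{5}3$, and a Pratt certificate for $q$ in $M$; by soundness of Pratt certificates, formalised in $I\Sigma_{1}^{b}$, $q$ is irreducible in $M$. From here the contradiction is reached exactly as in the proof of the Theorem: picking $z\in\mathbb{Z}$ with $q\equiv_{p}z$ for all standard primes $p$, irreducibility and infinitude of $q$ force that no standard prime divides $q$, hence none divides $z$, so $z\in\{-1,0,1\}$, contradicting $z\equiv_{5}q\equiv_{5}3$.

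The step that needs genuine care --- and which I expect to be the main obstacle --- is the soundness of Pratt certificates inside $I\Sigma_{1}^{b}$: that $I\Sigma_{1}^{b}$ proves that if $w$ codes a valid Pratt certificate for $m$, then $m$ is irreducible. Its easy half is the deduction, from $g^{m-1}\equiv 1\pmod m$ and $g^{(m-1)/q_{i}}\not\equiv 1\pmod m$ for the primes $q_{i}$ in the certified factorisation of $m-1$, that $g$ has multiplicative order exactly $m-1$ modulo $m$; this needs only the usual division argument together with uniqueness of factorisation of $m-1$, already available in $I\Delta_{0}$. Concluding from this that $m$ is prime amounts to noting that the $m-1$ distinct powers of $g$ are units modulo $m$ while a proper divisor of $m$ would be a non-unit in $\{1,\dots,m-1\}$ --- a pigeonhole count on $m-1$ residues --- and so rests on the usual development of elementary number theory (Fermat's little theorem, the structure of $(\mathbb{Z}/m\mathbb{Z})^{\times}$, and the relevant bounded pigeonhole principle) inside bounded arithmetic; see e.g.\ \cite{HP}. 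Granting this, the remainder is the bookkeeping already carried out for the Theorem.
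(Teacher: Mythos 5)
Your proposal follows essentially the same route as the paper, which itself offers only a remark in place of a proof: replace the explicit irreducibility clause of $A(n)$ by a Pratt-certificate $\Sigma_{1}^{b}$-definition of primality and rerun the overspill argument, the rest of the machinery (divisibility of $H$ by standard primes, $H\cap\mathbb{Z}=\{0\}$, the residue-mod-$5$ contradiction) carrying over unchanged. The one delicate point you single out --- that the soundness of Pratt certificates (certificate implies irreducible) must be available \emph{inside} $I\Sigma_{1}^{b}$, not merely true in $\mathbb{N}$ --- is exactly the step the paper leaves implicit, so your write-up is, if anything, more careful than the original.
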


\textbf{Question}: The obvious next question is now whether $IE_1$ is already sufficient to exclude $\mathbb{Z}$ as a direct summand of a nonstandard model. 
(This would, in particular, follow if $IE_{1}=IE_{2}$, which is still wide open.) It would also follow if there
was an $E_1$-definition of primality. Thus, in particular, it is a consequence of bounded Hilbert's $10$th problem stating that every $NP$ predicate is expressible by a bounded diophantine equation.\\

\section{Generalization}

Instead of $\mathbb{Z}$, we can consider other initial segments. It turns out that our arguments above allow two immediate generalizations.\\

\begin{thm}
(i) Let $M\models I\Delta_{0}+EXP$ and let $N$ be a cut of $M$ (i.e. a proper initial segment closed under the successor function). Then there is no $H\subseteq M$ such that $(M,+)=H\oplus N$.\\
(ii) Let $M\models IE_{2}$ and let $N\models PA$ be an initial segment of $M$. Then there is no $H\subseteq M$ such that $(M,+)=H\oplus N$.
\end{thm}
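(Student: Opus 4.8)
The plan is to carry out, for both (i) and (ii), the three-step scheme behind the Main Result, with $\mathbb{Z}$ replaced by $N$ and ``standard prime'' by ``prime of $N$''. Fix $H\subseteq M$ with $(M,+)=H\oplus N$ — in (ii) we may assume $N\subsetneq M$, as otherwise $H=\{0\}$ works — and write each $m\in M$ uniquely as $m=h_m+n_m$, $h_m\in H$, $n_m\in N$. The three steps to establish are: (a) every $h\in H$ is divisible by every prime $p\in N$; (b) consequently $m\equiv n_m\ (\mathrm{mod}\ p)$ for all $m\in M$ and all primes $p\in N$; (c) there is a prime $q\in M$ with $q>N$ (i.e.\ $q>n$ for every $n\in N$) and $q\not\equiv 1\ (\mathrm{mod}\ 5)$. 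Given (a)--(c) the contradiction comes exactly as in the Main Result: by (b), $q\equiv n_q\ (\mathrm{mod}\ p)$ for every prime $p\in N$; since $q$ is prime and exceeds all of $N$, no prime of $N$ divides $q$, hence none divides $n_q$; but every divisor of $n_q$ lies in the initial segment $N$, so $n_q$ has no prime divisor, i.e.\ $n_q\in\{0,1\}$ — and $n_q=0$ is impossible because $2\in N$ and $q$ is an odd prime, while $n_q=1$ is impossible because $5\in N$ and $q\not\equiv 1\ (\mathrm{mod}\ 5)$.

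For (a) one imitates the proof that every element of $H$ is divisible by every standard prime: given $h\in H$, $h>0$, and a prime $p\in N$ with $p\nmid h$, Euclidean division (already available from the base theory) gives $m$ with $pm<h<p(m+1)$, and setting $m'=h_m\in H$, $d=m'-m=-n_m\in N$, one computes $pm'-h=pd-r$ with $0<r<p$, so $pm'-h$ lies strictly between $p(d-1)$ and $pd$. In the original argument $p$ is standard, which delivers both $pm'\in H$ and $p(d-1),pd\in\mathbb{Z}$ for free; when $p$ is nonstandard in $M$ these two facts are exactly what must be re-secured, and this is the crux of the whole theorem. Here one uses that $N$ is closed under multiplication — automatic in (ii) since $N\models PA$, and in (i) justified by working inside $I\Delta_0+EXP$ — so that $p(d-1),pd\in N$ and, via uniqueness of the decomposition together with the multiplicative structure carried by $N$, that $pm'\in H$ as well; then $pm'-h\in N\cap H=\{0\}$, forcing $pd=r$, contradicting $0<r<p$. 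Step (b) follows at once from (a).

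Step (c) is where the two hypotheses visibly diverge. In (i), $N$ is a \emph{proper} cut, so $M$ has room above it, and $I\Delta_0+EXP$ proves enough elementary number theory — for any $x$ the factorial $x!$ exists, every prime factor of $x!-1$ exceeds $x$, and, $5$ dividing $x!$ for $x$ large, the product of those factors is $\equiv -1\ (\mathrm{mod}\ 5)$, so one of them is $\not\equiv 1\ (\mathrm{mod}\ 5)$ — to produce a prime $q>N$ with $q\not\equiv 1\ (\mathrm{mod}\ 5)$. In (ii) only $IE_2$ is available, so one reuses the $E_2$-formula $A(n)$ (relaxed to ask merely for a prime $\not\equiv 1\ (\mathrm{mod}\ 5)$ in $(n,2n)$, still $E_2$), observing that since $N\models PA$ it satisfies the Siegel--Walfisz-type statement ``for some $C$, every $n\ge C$ has such a prime in $(n,2n)$'', so $A(n)$ holds for all $n\in N$ (primes of the initial segment $N$ being primes of $M$); as a successor-closed cut of a model of $IE_2$ is never $E_2$-definable, the $E_2$-definable set $\{n:A(n)\}\supseteq N$ cannot equal $N$, hence meets $\{m:m>N\}$ at some $n^{\ast}$, and $A(n^{\ast})$ (with $n^{\ast}>C\in N$) yields the required prime $q\in(n^{\ast},2n^{\ast})$. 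The main obstacle remains the nonstandard-prime case of (a): showing that multiplication by elements of $N$ respects the decomposition $H\oplus N$ — equivalently, that $H$ is a submodule for the scalar action of $N$ on $M$ — is the one genuinely new input needed beyond the Main Result, and it is where $N\models PA$, respectively the strength of $I\Delta_0+EXP$, does its work.
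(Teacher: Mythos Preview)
For part (ii) your plan coincides with the paper's: both reuse the Main Result's prime argument with $N$ in place of $\mathbb{Z}$, relying on $N\models PA$ for the Dirichlet-type input and on $E_2$-overspill above the cut $N$. For part (i), however, the paper takes a genuinely different route and does \emph{not} run the prime argument at all. Instead it follows Mendelson: writing $x=\rho_1(x)+\rho_2(x)$ with $\rho_1(x)\in H$, $\rho_2(x)\in N$, the paper asserts that $\rho_2$ is a ring homomorphism, then uses that $I\Delta_0+EXP$ proves Lagrange's four-square theorem to deduce that $\rho_2$ preserves the order and is therefore injective, contradicting $H\neq\{0\}$. Your steps (b) and (c) for (i) --- in particular the factorial trick for producing a prime $q>N$ with $q\not\equiv 1\pmod 5$ --- are thus entirely absent from the paper's treatment.

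Your sketch has a real gap precisely where you yourself label it ``the crux'': the inference $m'\in H,\ p\in N\Rightarrow pm'\in H$ in step (a). In the original lemma this held because a \emph{standard} $p$ makes $pm'$ a genuine finite sum $m'+\cdots+m'$ inside the additive subgroup $H$; for nonstandard $p\in N$ that external iteration is unavailable, and you offer no replacement beyond the remark that ``the strength of $I\Delta_0+EXP$'' (respectively $N\models PA$) ``does its work'' here. Nothing in either hypothesis visibly forces an arbitrary additive complement $H$ to be closed under multiplication by nonstandard elements of $N$, and your side claim that in (i) the cut $N$ is multiplicatively closed ``justified by working inside $I\Delta_0+EXP$'' is likewise unsupported --- a cut is only assumed successor-closed. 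To be fair, the paper's bare assertion that $\rho_2$ is a ring homomorphism hides the very same multiplicativity question; but having explicitly identified this as ``the one genuinely new input needed'', you were obliged to supply it, and the proposal does not.
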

\begin{proof}
(i) Assume otherwise, and define functions $\rho_{1}:M\rightarrow H$ and $\rho_{2}:M\rightarrow N$ by $x=\rho_{1}(x)+\rho_{2}(x)$ for all $x\in M$. Then $\rho_{2}$ is a ring homomorphism from $M$ to $N$.
(In particular, on sees that $N$ must be closed under addition and multiplication and hence in fact be a model of $I\Delta_{0}$.) Now we can use the strategy of \cite{Me}: $I\Delta_{0}+EXP$ proves that every positive number is the sum
of four squares. Therefore, if $m_{1}<m_{2}$ are elements of $M$, then there are $x_1,x_2,x_3,x_4$ in $M$ such that $m_2-m_1=x_{1}^{2}+x_{2}^{2}+x_{3}^{2}+x_{4}^{2}$ and thus 
$\rho_{2}(m_{2})-\rho_{2}(m_{1})=\rho_{2}(m_{2}-m_{1})=\rho_{2}(x_{1})^{2}+\rho_{2}(x_{2})^{2}+\rho_{2}(x_{3})^{2}+\rho_{2}(x_{4})^{2}>0$, so $\rho_{2}(m_{2})>\rho_{2}(m_{1})$. Hence $\rho_{2}$ preserves the ordering $M$.
But, unless $H=\{0\}$ (and hence $N$ is not a proper initial segment), there are $m_{1},m_{2}\in M$ with $\rho_{2}(m_{1})=\rho_{2}(m_{2})$, a contradiction.\\
(ii) Here, we re-use our argument from above: If such $H$ existed, then every element of $H$ would be divisible by every element of $M$. Therefore, for any $m\in M$, there would be $n\in N$ such that
$m\equiv_{k}n$ for all $k\in M$. But, as $PA$ proves the Dirichlet theorem used above, it follows by $IE_{2}$-overspill that $M$ contains an irreducible element $a$ such that $a\equiv_{5}2$ and hence
$a$ is not congruent to any element of $N$ modulo all elements of $M$, a contradiction.
\end{proof}

\textbf{Remark}: In (i), $I\Delta_{0}+EXP$ can be replaced by the weaker system $I\Delta_{0}+\Omega_{1}$, which is sufficient to prove Lagrange's theorem. In (ii), $PA$ can be replaced with any fragment of arithmetic
strong enough to prove the asymptotic version of Dirichlet's theorem.\\

\end{document}